\theoremstyle{plain}
\newtheorem{introconj}{Conjecture}
\newtheorem{theorem}{Theorem}[section]
\newtheorem{proposition}[theorem]{Proposition}
\newtheorem{question}[theorem]{Question}
\newtheorem*{proposition*}{Proposition}
\theoremstyle{definition}
\newtheorem{example}[theorem]{Example}
\newtheorem{examples}[theorem]{Examples}
\theoremstyle{remark}
\newcommand{\secref}[1]{Section~\ref{#1}}
\newcommand{\thmref}[1]{Theorem~\ref{#1}}
\newcommand{\propref}[1]{Proposition~\ref{#1}}
\newcommand{\conjref}[1]{Conjecture~\ref{#1}}
\def\Fib{{\mathrm{Fib}}}
\def\Q{{\mathbb Q}}
\def\C{{\mathbb C}}
\def\Der{\mathrm{Der}}
\def\Hom{\mathrm{Hom}}
\def\cat{\mathsf{cat}}
\def\rank{\mathrm{rank}}
\def\aut{\mathrm{aut}_1}
\def\B{B\mathrm{aut}_1}
\def\secat{\mathsf{secat}}
\begin{document}

\title[Rational Sectional Category of  the Universal Fibration]
{The Rational Sectional Category of Certain Universal Fibrations}

\author{Gregory  Lupton}

\address{Department of Mathematics,
           Cleveland State University,
           Cleveland OH 44115}

\email{G.Lupton@csuohio.edu}

\author{Samuel Bruce Smith}

\address{Department of Mathematics,
   Saint Joseph's University,
   Philadelphia, PA 19131}

\email{smith@sju.edu}

\date{\today}

\keywords{sectional category, rational homotopy theory, universal fibration, Halperin conjecture, Lusternik-Schnirelmann category}

\subjclass[2000]{Primary: 55P62; Secondary: 55M30, 55R15, 55R70}

\begin{abstract}
We  prove that the  sectional category of the universal fibration 
with fibre $X$,  for $X$ any space that satisfies a well-known conjecture of Halperin,  equals one after rationalization.  
\end{abstract}
 
\thanks{This work was partially supported by a grant from the Simons Foundation (\#209575 to Gregory Lupton).}

\maketitle

\section{Main Result} 
We begin with a concise r{\'e}sum{\'e} of the ingredients, then a statement, of our main result.    Notations used  and assertions made here are described in greater detail below.  \emph{Sectional category} ($\secat$) is a numerical invariant of a fibration that extends the notion of \emph{LS category} ($\cat$) of a space to fibrations.  We normalize these invariants: $\secat(p) = 0$ when fibration  $p \colon E \to B$ has a section.   Sectional category plays a role in several interesting applications (e.g. see \cite[\S 9.3]{CLOT} and \cite{Far03}).  

Fibrations  with fibre a fixed space $X$ are classified by a universal fibration  with fibre $X$ \cite{St,  Dold, May}. For fibrations of simply connected spaces, this universal fibration may be identified, up to homotopy, as the map on Dold-Lashof classifying spaces $u_X  \colon\B^*(X) \to \B (X)$  that is induced by  an inclusion $ \aut^*(X) \hookrightarrow \aut(X)$  of connected  monoids  of self-equivalences \cite{D-L59, Got2}.   

 A fibration $p \colon E \to B$  with $B$ simply connected admits a rationalization $p_\Q \colon E_\Q \to B_\Q$ that, homotopically, represents a simplification of $p$.  Then the \emph{rational sectional category} ($\secat_0$) of fibration $p$ is defined by setting  $\secat_0(p) := \secat(p_\Q).$   
 
Now we have general inequalities $\secat_0(p) \leq \secat_0(u_X) \leq \cat_0(\B(X))$,  with $\cat_0(\B(X))$ the \emph{rational LS category} of the classifying space, which is often infinite (see \cite{Gat95, Gat96}).  So it  is natural to ask whether $\secat_0(u_X)$ may be finite.  

Our main result is the following:

\begin{theorem} \label{main} 
Let $X$ be any $F_0$-space that satisfies \conjref{conj:Halperin} below.     Then $\secat_0(u_X) = 1$,
where  $u_X \colon \B^*(X) \to \B(X)$ denotes the universal fibration with fibre $X$.   
\end{theorem}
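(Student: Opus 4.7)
The lower bound $\secat_0(u_X) \geq 1$ is immediate: $u_X$ admits no rational section, since such a section would yield a rational retraction of the inclusion $\aut^*(X) \hookrightarrow \aut(X)$, impossible for non-contractible $X$. So the content of the theorem is the upper bound $\secat_0(u_X) \leq 1$, which I plan to establish by working with the Sullivan model of $u_X$.

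Let $(\Lambda V, d)$ denote the minimal Sullivan model of $X$, with $V = V^{\mathrm{even}} \oplus V^{\mathrm{odd}}$ of equal finite dimension and $d y_i = f_i(x_1, \ldots, x_m)$ a regular sequence---the defining $F_0$-data. By Halperin's conjecture for $X$ together with Meier's theorem, $\pi_*(\aut(X)) \otimes \Q$ is concentrated in odd degrees, hence $\pi_*(\B(X)) \otimes \Q$ lies in even degrees. Minimality and parity then force the minimal Sullivan model of $\B(X)$ to have the form $(\Lambda W, 0)$ with $W$ purely in even degrees: any putative differential would have odd degree, whereas $\Lambda^{\geq 2} W$ consists of even-degree elements. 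The relative Sullivan model of $u_X$ is therefore an extension $(\Lambda W, 0) \to (\Lambda W \otimes \Lambda V, D)$, in which $D$ extends $d$ by correction terms lying in the ideal generated by $W$.

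I then apply a Sullivan-model characterization of $\secat_0 \leq 1$ (of the type developed for sectional category via relative models, as in \cite[\S 9]{CLOT}): the bound is equivalent to the existence of a $\Lambda W$-DGA homotopy section $\sigma$ of the projection
\[
\pi\colon (\Lambda W \otimes \Lambda V, D) \twoheadrightarrow \bigl(\Lambda W \otimes \Lambda V / (\Lambda^{\geq 2} V^+), \bar D\bigr)
\]
onto the ``square-zero'' $\Lambda W$-DGA $\Lambda W \oplus (\Lambda W \otimes V^+)$. The strategy is to define, for each generator $v \in V$, an element $\sigma(v) \in \Lambda W \otimes \Lambda V$ so that $\pi\sigma(v)$ is cohomologous to $v$ in the quotient and the DGA relation $D\sigma(v) = \sigma(\bar D v)$ holds. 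On $V^{\mathrm{even}}$ a near-tautological choice suffices; the real work is on $V^{\mathrm{odd}}$, where the equation couples $f_i(x)$ with the twist given by the $W$-correction terms.

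The main obstacle is verifying that the required lifts $\sigma(y_i)$ exist. Here Halperin's conjecture enters crucially: it implies $u_X$ is totally non-cohomologous to zero, giving the cohomology splitting $H^*(\Lambda W \otimes \Lambda V, D) \cong H^*(\Lambda W) \otimes H^*(\Lambda V)$ as graded algebras. This cohomological splitting furnishes the cocycle representatives needed to solve the compatibility equations defining $\sigma$, while the regular-sequence property of the $f_i$ (i.e.\ the $F_0$-structure) ensures no secondary obstructions arise in propagating the construction. Assembling $\sigma$ and then verifying $\pi\sigma \simeq \mathrm{id}$ by a cocycle-level check yields $\secat_0(u_X) \leq 1$, completing the proof.
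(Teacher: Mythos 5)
Your lower bound is right in spirit but the justification is too quick. A rational section of $u_X$ forces the connecting map $\pi_{*+1}(\B(X))\otimes\Q\to\pi_*(X)\otimes\Q$ to vanish, i.e.\ forces the rational Gottlieb groups of $X$ to vanish --- and that \emph{can} happen for non-contractible (even simply connected) spaces, so ``impossible for non-contractible $X$'' is not a proof. What actually saves the argument is ellipticity: for an elliptic space the top nonzero degree of $\pi_*(X)\otimes\Q$ is always Gottlieb, which is exactly how \propref{elliptic} proceeds. You need the $F_0$ hypothesis here, not mere non-contractibility.

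The upper bound is where the proposal breaks down. First, the model-theoretic criterion you invoke is not correct as stated. Svarc's characterization says $\secat(p)\leq 1$ iff the fibrewise join $p\ast p$ admits a section; in Sullivan terms this asks for a DGA \emph{retraction onto $\land W$} from (a relative Sullivan model of) the quotient $(\land W\otimes\land V)/(\land^+V)^2$, not for a homotopy section of the projection $\pi$ landing back in $\land W\otimes\land V$. The two conditions are genuinely different: for the path--loop fibration over $K(\Q,2k)$ (Example (b) in the paper) the fibre model has a single odd generator, so the ideal $\land^{\geq2}V^+$ is zero, $\pi$ is the identity and trivially admits a section, yet $\secat_0=\infty$. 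Second, even with the criterion repaired, the heart of the matter --- actually producing the retraction --- is asserted rather than proved: ``the cohomological splitting furnishes the cocycle representatives\dots\ no secondary obstructions arise'' is not an argument, and the TNCZ property of $u_X$ does not by itself split anything over $\land W$. The paper's route is genuinely different and the difference matters: it passes to the fibrewise join, whose fibre $X\ast X=\Sigma(X\wedge X)$ is rationally a wedge of \emph{odd}-dimensional spheres (because $H^*(X;\Q)$ is evenly graded, by Halperin--Stasheff), and whose base $\B(X)$ is rationally a product of even Eilenberg--Mac Lane spaces (Meier). The technical \propref{prop: odd spheres fibre} then builds a section by induction on the generators of the fibre's model, using crucially that the rational homotopy Lie algebra of a wedge of odd spheres is \emph{free} (so the product $v_1v_2$ of the two bottom generators is realized by the quadratic part of the differential, which kills the potential $\land W$-terms in $D(v_1)$), to show the fibre ideal is $D$-stable. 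Your proposal never passes to the join and never uses this structure; working directly with the relative model of $u_X$ itself cannot succeed, since $u_X$ has no rational section --- that is precisely the content of the lower bound.
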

 
An $F_0$-space is any (non-trivial) space with $H^*(X; \Q)$ and $\pi_*(X) \otimes \Q$ both finite-dimensional and $H^{odd}(X; \Q) = 0$.  Examples of $F_0$-spaces for which \conjref{conj:Halperin} is satisfied include even-dimensional spheres, complex projective spaces,  homogeneous  spaces $G/H$ with $\rank\, G = \rank\,  H$, and finite products of any of these spaces.

\section{Introduction}
   
We continue with a fuller description of the ingredients just indicated.  
We assume all spaces are simply connected CW complexes of finite type.  Let $p\colon E \to B$ be a fibration.
For $n \geq 1$,  set $\secat(p) = n-1$ if $n$ is the minimal number of open sets $U_i$ in a cover of $B$ such that $p$ admits a section over each $U_i$.  
Set $\cat(B) = n-1$ if $n$ is the minimal number of open sets $U_i$ in a cover of $B$ such that each inclusion $U_i \to B$ is nulhomotopic.   
The inequalities  $\secat(p) \leq \cat(B)$ and $\secat(f^*(p)) \leq \secat(p),$ for $f^*(p)$ the pull-back of $p$ by a map $f \colon B' \to B,$ are both proved directly (see, e.g. 
Proposition 9.14 and Exercise 9.3 of \cite{CLOT}, which reference also contains many facts concerning $\secat$ and $\cat$).   Generally speaking, both $\cat$ and $\secat$ are delicate invariants, and difficult to compute.  It is quite surprising, therefore,  that we are able to obtain a global result such as  \thmref{main}.  This is especially so considering that the universal fibration is a rich construction that involves large and complex spaces whose general structure is not well understood.

Because $u_X  \colon\B^*(X) \to \B (X)$ is \emph{universal},  $\secat(u_X)$, when finite,  is an upper bound for the sectional category of every fibration with fibre $X$. 
From properties of rationalization, we have $\secat_0(p) \leq \secat(p)$.  This inequality follows using an alternate characterization of $\secat$ that we indicate at the start of the next section---involving the so-called fibrewise join construction.          
We also have $\secat_0(p) := \secat(p_\Q) \leq \cat(B_\Q)$, with the inequality following as before, applied to the rationalized fibration $p_\Q$.    For simply connected $B$, we have 
$\cat(B_\Q) = \cat_0(B)$, where $\cat_0$ denotes the rational LS category.     By the naturality of rationalization with respect to pull-backs, we obtain   $\secat_0(p) \leq \secat_0(u_X)$ for any fibration $p \colon E \to B$ with fibre $X$.

Stanley has given a complete calculation of the rational sectional category  of  spherical fibrations \cite{Stanley}.  His results for the even-dimensional sphere imply (in our normalized notation) that $\secat_0(u_{S^{2n}}) = 1$.   Here, we  extend  Stanley's result  from $S^{2n}$ to any $F_0$-space that satisfies Halperin's Conjecture in rational homotopy theory.   We  discuss this class of spaces now.  
  
As above, an  {\em $F_0$-space} $X$  is  an {\em elliptic space}---$H^*(X; \Q)$ and $\pi_*(X) \otimes \Q$  are both  finite-dimensional---with  evenly-graded rational cohomology:  $H^{odd}(X; \Q) = 0.$    Halperin has conjectured the following  generalization of classical results on the rational cohomology of homogeneous spaces. 

 \begin{introconj}[Halperin]\label{conj:Halperin}
Let $X$ be an $F_0$-space and  $p \colon E \to B$ any fibration of simply connected spaces with fibre $X$. Then the rational Serre spectral sequence for $p$ collapses at the $E_2$-term.
 \end{introconj}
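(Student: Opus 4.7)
The plan is to reduce \conjref{conj:Halperin} to a purely algebraic statement about $H^*(X;\Q)$ and then attack it via derivations. Since $X$ is an $F_0$-space, its minimal Sullivan model is \emph{pure}, of the form $(\Lambda(V \oplus W), d)$ with $V$ concentrated in even degrees, $W$ in odd degrees, $\dim V = \dim W < \infty$, $d|_V = 0$, and $d(W) \subset \Lambda V$. Choosing bases $x_1, \ldots, x_n$ for $V$ and $y_1, \ldots, y_n$ for $W$, one has
$$H^*(X;\Q) \;=\; \Q[x_1, \ldots, x_n]\big/(dy_1, \ldots, dy_n),$$
with the relations $dy_j$ forming a regular sequence. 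This is the classical complete-intersection structure enjoyed by $F_0$-spaces, and it is the only input from the hypothesis that I would use directly.

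The main tool is the Meier--Thomas equivalence: for an $F_0$-space $X$, \conjref{conj:Halperin} holds if and only if every derivation of $H^*(X;\Q)$ of strictly negative (degree-lowering) type vanishes. The forward direction is established through the classifying-space formalism described in the introduction: a fibration with fibre $X$ is classified by a map $B \to \B(X)$, and the rational Serre spectral sequence is controlled by the action of $\pi_*(\aut(X)) \otimes \Q$ on $H^*(X;\Q)$, which factors through $\Der^{<0}(H^*(X;\Q))$, so universal collapse over all $B$ is equivalent to triviality of this universal action. The reverse direction proceeds by constructing, from any non-trivial $\theta \in \Der^{<0}(H^*(X;\Q))$, an explicit fibration over a suitable Eilenberg--MacLane space whose spectral sequence must carry a non-zero differential detected by $\theta$. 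My plan is to present this reduction cleanly within the Sullivan framework, which converts the problem entirely into commutative algebra.

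The remaining step---and the honest main obstacle---is to prove that $\Der^{<0}(H^*(X;\Q)) = 0$ for \emph{every} $F_0$-space $X$. This is the form in which Halperin's conjecture has resisted full resolution for several decades: it is known for homogeneous spaces $G/H$ of equal rank (via Chevalley's theorem on invariants), for $F_0$-spaces of small formal dimension, and the class is closed under finite products---but the general case remains open. The strategy would be to lift a putative $\theta \in \Der^{-k}(H^*(X;\Q))$ to a derivation $\tilde\theta$ of the pure model and then exploit the interplay between $\tilde\theta$ and the Koszul resolution of $(dy_1, \ldots, dy_n)$: descent of $\tilde\theta$ to cohomology imposes linear constraints involving the Jacobian of the $dy_j$, and the Poincar{\'e} duality of $H^*(X;\Q)$ forces the socle to be one-dimensional, severely restricting how $\theta$ can act on top-degree classes. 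Whether these constraints alone force $\theta = 0$ in full generality is precisely the point at which all currently known arguments stall; any complete proof would need a new mechanism for propagating the constraint from the socle back through the full cohomology ring.
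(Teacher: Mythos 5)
There is no proof of this statement to compare against: in the paper, \conjref{conj:Halperin} is exactly that---a \emph{conjecture}. It is never proved; it is taken as a \emph{hypothesis} in the main theorem (``Let $X$ be any $F_0$-space that satisfies \conjref{conj:Halperin}\dots''), and the paper's contribution is to deduce $\secat_0(u_X)=1$ from it, not to establish it. So any ``proof proposal'' for the conjecture itself is attempting something the paper does not claim and that remains open in general.

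Your write-up is honest about this, and the reduction you describe is correct and is the standard one: the conjecture is equivalent to the vanishing of the negative-degree derivations of $H^*(X;\Q)$ (the paper cites Thomas and Meier for precisely this, writing the condition as $\Der_{>0}(H^*(X;\Q))=0$), and the equivalence is mediated by the classifying space $\B(X)$ and the action of $\pi_*(\aut(X))\otimes\Q$ on cohomology. The known cases you list (equal-rank homogeneous spaces, few generators, closure under products and even fibrations) match the paper's survey. But the final step---showing the derivation module vanishes for an arbitrary $F_0$-space---is exactly where you stop, and stopping there means you have not proved the statement. The Koszul/Jacobian/socle strategy you sketch is a plausible line of attack, but as written it produces no theorem: you give no argument that the constraints you describe force a negative-degree derivation to vanish, and indeed no such argument is currently known. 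The correct resolution here is not to strengthen the sketch but to recognize that the statement is a hypothesis of the paper, not a result of it; a complete proof would be a major advance well beyond the scope of this article.
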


The conjecture  is equivalent to the assertion that $\Der_{> 0}(H^*(X; \Q)) = 0$   for $X$ an $F_0$-space   where $\Der_{>0}(A)$ is the   graded Lie algebra of  degree-lowering derivations  of the algebra $A$ \cite{Thomas, Meier}.  The conjecture follows easily from this version for $X = S^{2n}, \C P^n$ and, more generally, for any  space with rational cohomology a truncated polynomial algebra.  Meier  proved  the conjecture  for flag-manifolds $G/T$ with $T$ a maximal torus, and other homogeneous spaces \cite[Th.B]{Meier}.      Shiga and Tezuka  extended Meier's result to the general case of  homogeneous spaces $G/H$ of equal rank pairs \cite{St}.   Halperin's Conjecture has also been confirmed for the cases in which $H^*(X; \Q)$ has $3$ or fewer generators \cite{Lupton}.  Markl has shown that the class of spaces for which  \conjref{conj:Halperin} holds is closed under fibrations, not just products \cite{Markl}.
Our main result, Theorem \ref{main},  applies in all these cases.   We refer the reader to  \cite[p.516]{F-H-T01} for a discussion and other references.

Meier made the following connection between Halperin's Conjecture and the rational homotopy of the universal fibration \cite[Th.A]{Meier}\begin{theorem}[Meier]  \label{meier}Let $X$ be an $F_0$-space.  Then $X$ satisfies Halperin's Conjecture if and only if $\B(X)$ is rationally equivalent to a product of even-dimensional Eilenberg-Mac~Lane Spaces.  $\qed$
 \end{theorem}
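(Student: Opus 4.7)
The plan is to translate both sides of the equivalence into statements about the Sullivan--Schlessinger derivation complex of the minimal model of $X$, and then to compare them using the pure model structure of an $F_0$-space.

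First, I would observe that $\B(X)_\Q$ is a product of even Eilenberg--Mac~Lane spaces precisely when its minimal Sullivan model has the form $(\Lambda W, 0)$ with $W$ concentrated in even degrees. Once $W$ is evenly graded, any nonzero differential would have to produce odd decomposables from even generators, which is impossible; so the condition reduces to $\pi_*(\B(X)) \otimes \Q$ being concentrated in even degrees, equivalently $\pi_*(\aut(X)) \otimes \Q$ in odd degrees. Via the Sullivan--Schlessinger identification $\pi_n(\aut(X)) \otimes \Q \cong H_n(\Der \mathcal{M}_X, D)$ with $D(\theta) = d \circ \theta - (-1)^{|\theta|}\theta \circ d$, this becomes the vanishing $H_{\mathrm{even},\,>0}(\Der \mathcal{M}_X) = 0$, where $\mathcal{M}_X = (\Lambda V, d)$ is the minimal Sullivan model of $X$.

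Second, I would invoke the standard reformulation (due to Thomas, cf.\ \cite{Thomas, Meier}) that Halperin's Conjecture for an $F_0$-space $X$ is equivalent to $\Der^{>0}(H^*(X;\Q)) = 0$. Since $H^*(X;\Q)$ is evenly graded, every such derivation is automatically of even degree, so the condition is really $\Der^{\mathrm{even},\,>0}(H^*(X;\Q)) = 0$.

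Third, the crux: I would establish $H_{\mathrm{even},\,>0}(\Der \mathcal{M}_X) = 0 \iff \Der^{>0}(H^*(X;\Q)) = 0$ using the pure model $\mathcal{M}_X = (\Lambda(x_1, \ldots, x_n, y_1, \ldots, y_n), d)$ with $|x_i|$ even, $|y_j|$ odd, $dx_i = 0$, and $dy_j = f_j \in \Q[x_i]$, so $H^*(X;\Q) = \Q[x_i]/(f_j)$. Any even cycle $\theta$ descends to a derivation $\bar\theta$ of cohomology via $[x_i] \mapsto [\theta(x_i)]$; conversely a cohomology derivation lifts to a cycle by picking representatives $\theta(x_i) \in \Q[x_i]$, using the Jacobian relations $\sum_i \frac{\partial f_j}{\partial x_i}\theta(x_i) = \sum_k h_{jk} f_k$ forced by $\bar\theta([f_j]) = 0$, and setting $\theta(y_j) = \sum_k h_{jk} y_k$. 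For the vanishing direction, assume the Halperin condition and let $\theta$ be an even positive-degree cycle; then $[\theta(x_i)] = 0$, so $\theta(x_i) = du_i$ for some $u_i$. Define an odd derivation $\phi$ by $\phi(x_i) = u_i$ and $\phi(y_j) = v_j$ where $v_j$ satisfies $dv_j = \theta(y_j) - \phi(f_j)$; the right-hand side is a cocycle of odd total degree, hence a coboundary since $H^{\mathrm{odd}}(X;\Q) = 0$. Then $D\phi = \theta$.

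The main obstacle is the third step, in particular constructing the null-homotopy $\phi$ realizing even positive-degree cycles as boundaries. Both $F_0$-features enter essentially here: the evenly-graded cohomology forces the required odd-degree cocycles to be exact, while the pure form of the minimal model permits an explicit inductive construction of the needed lifts through the Jacobian relations.
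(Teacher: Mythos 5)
The paper offers no proof of this statement: it is quoted from Meier \cite[Th.A]{Meier} with a citation and a tombstone, so there is no internal argument to compare yours against. Your proposal is, in substance, a correct reconstruction of Meier's original argument, and all three reductions hold up. (i) Since $\B(X)$ is simply connected and of finite rational type (because $\Der(\land V,d_X)$ is degreewise finite-dimensional for $X$ elliptic), an evenly graded space of generators forces the minimal model differential to vanish for parity reasons, so ``rationally a product of even Eilenberg--Mac~Lane spaces'' is equivalent to the vanishing of $\pi_{\mathrm{even}}(\aut(X))\otimes\Q$ in positive degrees, which the Sullivan/Schlessinger--Stasheff identification converts to $H_{\mathrm{even},>0}\big(\Der(\land V,d_X)\big)=0$. (ii) Using the derivation reformulation of Halperin's Conjecture as input is legitimate and non-circular; it is exactly the equivalence the paper itself attributes to Thomas and Meier. (iii) Your descent/lift argument on the pure model in fact produces an isomorphism $H_{\mathrm{even},>0}\big(\Der(\land V,d_X)\big)\cong\Der_{>0}\big(H^*(X;\Q)\big)$: descent is well defined because an even cycle commutes with $d$ and kills the classes $[f_j]$, boundaries descend to zero, the Jacobian relations give surjectivity, and the null-homotopy $\phi$ gives injectivity. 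The one parity check worth spelling out is that $\theta(y_j)-\phi(f_j)$ has odd degree because $|\theta|$ is even and $|y_j|$ is odd, so no degree-zero constant can obstruct exactness and $H^{\mathrm{odd}}(X;\Q)=0$ applies; similarly $[\theta(x_i)]=0$ forces $\theta(x_i)$ to be honestly exact even when it lands in degree zero. In a final write-up you should make explicit the finite-type hypothesis needed to realize $(\land W,0)$ as an actual product of Eilenberg--Mac~Lane spaces and the verification that boundaries in the derivation complex induce the zero derivation on cohomology, but neither is a gap.
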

Meier's Theorem implies  that   $\cat_0(\B(X)) = \infty$ for $X$ an $F_0$-space that satisfies Halperin's Conjecture.  In fact, this is the case for any elliptic space $X$  \cite{Gat95}.  
 We will use  \thmref{meier} to deduce  \thmref{main} in \secref{sec2} as a consequence of \propref{prop: odd spheres fibre} a technical result  concerning sections of rational fibrations.

 \section{Rational Sectional Category of a Fibrewise Join} \label{sec2}

We recall a special case of a characterization of $\secat$ in terms of the fibrewise join construction.   
 Given  $p \colon E \to B$  with fibre $X$, the fibrewise join $p\ast p$  is  a fibre sequence: 
 $\xymatrix{ X \ast X \ar[r] & E \ast E \ar[r]^-{p \ast p} & B.}$   As is well-known, we may identify the (ordinary) join as  $X \ast X = \Sigma(X \wedge X).$
  The following is a  special case of  a result of Svarc (see \cite[Prop.8.1]{James}): 
\begin{proposition}\label{join}
Let $p \colon E \to B$ be a fibration.  Then $\secat(p) \leq 1$ if and only if   $p\ast p$ has a section. $\qed$
\end{proposition}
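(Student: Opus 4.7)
The plan is to work with an explicit model of the fibrewise join.  The total space $E \ast E$ can be realised as the quotient of $E \times_B E \times [0,1]$ under the endpoint identifications $(e_0, e_1, 0) \sim (e_0, e_1', 0)$ and $(e_0, e_1, 1) \sim (e_0', e_1, 1)$, valid whenever $e_0', e_1'$ lie in the same fibres as $e_0, e_1$ respectively, with $(p \ast p)[e_0, e_1, t] := p(e_0) = p(e_1)$.  The crucial observation I will exploit is that the open subsets
\[
V_0 := \{[e_0, e_1, t] : t < 1\} \quad \text{and} \quad V_1 := \{[e_0, e_1, t] : t > 0\}
\]
cover $E \ast E$, and on each $V_i$ only one of the two $E$-coordinates is collapsed out, yielding continuous projections $\pi_0 \colon V_0 \to E$, $[e_0, e_1, t] \mapsto e_0$, and $\pi_1 \colon V_1 \to E$, $[e_0, e_1, t] \mapsto e_1$, each satisfying $p \circ \pi_i = (p \ast p)|_{V_i}$.

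For the implication from a section of $p \ast p$ to $\secat(p) \leq 1$, I would start from any section $\sigma$ of $p \ast p$ and set $U_i := \sigma^{-1}(V_i)$.  These are open and cover $B$, and $s_i := \pi_i \circ \sigma|_{U_i}$ is a local section of $p$ over $U_i$.  Conversely, given $U_0, U_1$ covering $B$ with local sections $s_i$ of $p$, I would choose a partition of unity $\{\phi_0, \phi_1\}$ subordinate to $\{U_0, U_1\}$ and define
\[
\sigma(b) := [s_0(b), s_1(b), \phi_1(b)]
\]
on $U_0 \cap U_1$, extending to $b \notin U_0$ by the class $[{-}, s_1(b), 1]$, which is well-defined and depends only on $s_1(b)$ since $\phi_1(b) = 1$ there, and symmetrically for $b \notin U_1$.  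Then $(p \ast p) \circ \sigma = \mathrm{id}_B$ by construction.

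The main obstacle will be verifying that $\sigma$ is continuous on all of $B$.  As $b$ approaches a point outside $U_0$, the first coordinate $s_0(b)$ need not extend, but simultaneously $\phi_1(b) \to 1$, so the representative enters the $t = 1$ stratum on which the $e_0$-coordinate has been collapsed in the quotient.  The identifications in the fibrewise join are designed precisely to absorb this indeterminacy, but making the gluing formally continuous (together with the symmetric check near $t = 0$) is the technical heart of the argument.  Once this is in hand, the two implications combine to give the stated equivalence.
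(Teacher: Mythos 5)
The paper offers no proof of this proposition at all: it is quoted as a special case of a theorem of Svarc, with a citation to James, Prop.~8.1, and a \(\qed\). So your argument is not an alternative to the paper's proof but rather a direct proof of the black box the paper invokes, and in outline it is exactly the classical argument. The direction from a section \(\sigma\) of \(p\ast p\) to \(\secat(p)\leq 1\) is complete as you state it: \(V_0, V_1\) are open (their preimages in \(E\times_B E\times[0,1]\) are open and saturated), the projections \(\pi_i\) are well defined and continuous there, and \(U_i=\sigma^{-1}(V_i)\), \(s_i=\pi_i\circ\sigma\) give the required cover with local sections. For the converse, two points need attention. The partition of unity requires \(B\) to be normal or paracompact; this is harmless since the paper works with CW complexes, but it should be said. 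The continuity of the glued section \(\sigma(b)=[s_0(b),s_1(b),\phi_1(b)]\), which you rightly single out as the technical heart, is a genuine issue \emph{for the quotient topology you chose}: at a point \(b\notin U_1\) with \(\phi_1(b)=0\), a saturated open neighborhood of the class \([s_0(b),-,0]\) must contain the entire slice \(\{s_0(b)\}\times p^{-1}(b)\times\{0\}\), and when the fibre is non-compact there need not be a uniform ``thickness'' in the \(t\)-direction, so nearby classes \([s_0(b'),s_1(b'),\phi_1(b')]\) with small \(\phi_1(b')>0\) can escape it. The standard resolution, and the one implicit in Svarc's and James's treatment, is to topologize the fibrewise join with the Milnor-type (initial) topology determined by the coordinate functions \(t\) and the partial projections to \(E\); in that topology your formula is continuous essentially by definition, and over a paracompact base the two models of \(p\ast p\) are fibrewise homotopy equivalent, so the existence of a section is unaffected. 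With that substitution (or with the hypothesis that the fibre is compact, under which the quotient and Milnor topologies agree) your proof closes up and establishes the proposition.
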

Our main result is that the fibrewise join of the universal fibration
$$ \xymatrix{X \ast X \ar[r] & \B^*(X) \ast \B^*(X) \ar[rr]^{\, \, \, \, \, \, \, \, \, \, \, \, u_X \ast u_X} && \B(X)} $$
has   a section after rationalization when $X$ is  an $F_0$-space that satisfies Halperin's Conjecture. 
We make use of the  correspondence between fibre sequences of rational spaces and relative Sullivan models.   
Although we will recall some basic facts about minimal models, our proofs assume a working familiarity with them.
Our reference for rational homotopy is \cite{F-H-T01}. 
 
Let  $ X \to  E \stackrel{p}{\to}  B$  be a fibre sequence.  The {\em relative Sullivan model} for $p$  is a sequence
$$
\xymatrix{  \land W, d_B \ar[r]^{\! \! \! \! \! \!  J}  &  \land W \otimes \land V, D
\ar[r]  & \land V, d_X}$$
of  DG algebras, with $ \land W, d_B$ and  $\land V, d_X$ the Sullivan minimal models for $B$ and $X$, respectively \cite[Prop.15.5]{F-H-T01}.  The differential $D$ satisfies
$$D(w) = d_B(w) \hbox{\, for \, }  w \in W \hbox{\,  and  \, } D(v) -d_X(v) \in \land^{+} W \otimes \land V  \hbox{\, for \, }  v \in V.$$  
The    inclusion $J$ is a model for $p$.  Applying spatial realization, we obtain that  $p_\Q \colon E_\Q \to B_\Q$   admits  a section if and only if $J$ has a left-inverse $S$. That is, $p_\Q$ admits a section if and only if there is a DG algebra map $S \colon  \land W \otimes \land V, D \to \land W, d_B$  with  $S \circ J = \mathrm{id}_{\land W}$.  We prove:  

\begin{proposition}\label{prop: odd spheres fibre}
Let $ X \to  E \stackrel{p}{\to}  B$  be a fibre sequence.  Suppose  
 \begin{itemize} 
\item[(1)] $B$  has the rational homotopy type of  a product of even-dimensional Eilenberg-Mac Lane spaces, 
and 
\item[(2)] $X$ has the rational homotopy type of  a wedge of at least two odd-dimensional spheres.  
\end{itemize}
 
Then the rationalization $p_\Q \colon E_\Q \to B_\Q$ of $p$  admits a section.  
\end{proposition}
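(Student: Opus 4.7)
My plan is to construct a DG-algebra retraction $S\colon (\land W\otimes\land V, D)\to (\land W, 0)$ of the inclusion $J$ in the relative Sullivan minimal model of $p$. Hypothesis (1) gives $d_B = 0$ and confines $W$ to even degrees. Hypothesis (2) gives $V$ concentrated in \emph{odd} degrees: iterated Whitehead brackets of odd-degree classes are again odd (since $|[a,b]| = |a|+|b|-1$), so $\pi_*(X)\otimes\Q$ lives in odd degrees, and hence so does $V\cong\Hom(\pi_*(X)\otimes\Q,\Q)$. A wedge of spheres is moreover coformal, so $d_X$ may be taken to be quadratic.

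Define $S$ to be the identity on $\land W$ and zero on each generator of $V$, extended multiplicatively; note that the value $S(v)=0$ is in fact \emph{forced} by parity, as $|v|$ is odd while $\land W$ is even-graded. For each generator $v\in V$ decompose
\[
D(v) = d_X(v) + \beta(v) + \gamma(v) + h(v),
\]
with $d_X(v)\in \land^{\geq 2} V$, $\beta(v)\in \land^+W$, $\gamma(v)\in \land^+W\otimes V$ and $h(v)\in \land^+W\otimes \land^{\geq 2} V$. A term $e\otimes v'$ in $\gamma(v)$ would have $|e|+|v'|=|v|+1$ even, yet $|e|$ is even and $|v'|$ is odd; hence $\gamma(v)=0$. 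Consequently $S(D(v))=\beta(v)$, and ``$S$ is a DG-algebra map'' reduces to the vanishing $\beta(v)=0$ for every generator $v\in V$.

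I would establish this vanishing by induction on $|v|$, exploiting $D^2=0$. For the base case, let $v_i$ be a sphere class; since $k\geq 2$ there is some $j\neq i$ and the fiber model contains a generator $w_{ij}$ with $d_X(w_{ij})=v_iv_j$. Projecting $D^2(w_{ij})=0$ onto the $\land W\otimes V$ summand retains only $\beta(v_i)v_j-\beta(v_j)v_i$ after the parity rule $\gamma=0$ eliminates competing contributions from the extra part $D(w_{ij})-v_iv_j$, and linear independence of $v_i,v_j$ in $V$ forces $\beta(v_i)=\beta(v_j)=0$. For the inductive step, let $\xi\in V$ have $|\xi|>a_{\min}$ and correspond to the basis element $y\in \pi_*(X)\otimes \Q$. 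Because $\pi_*(X)\otimes\Q$ is the free graded Lie algebra on at least two odd-degree generators (modulo $[y_i,y_i]=0$), the centralizer of any sphere class $y_i$ of minimal degree $a_{\min}$ is the line $\Q y_i$; since $|y|>a_{\min}$ we have $y\notin\Q y_i$, so $[y,y_i]\neq 0$ and the fiber model supplies a generator $\eta'$ whose differential has the form
\[
d_X(\eta') = \lambda\,\xi v_i + \sum c_{JK}\,\xi_J\xi_K,\qquad \lambda\neq 0,
\]
with the remaining monomials satisfying $|\xi_J|,|\xi_K|<|\xi|$---this degree bound is exactly what forces the choice $a_i=a_{\min}$. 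Projecting $D^2(\eta')=0$ onto $\land W\otimes V$, the only surviving contribution is $\lambda\,\beta(\xi)\,v_i$: $\beta(v_i)=0$ by the base case, $\beta(\xi_J)=\beta(\xi_K)=0$ by induction, and the word-length-2 part of $h(\eta')$ contributes only through $\beta$'s of generators of degree strictly less than $|\xi|$. Hence $\beta(\xi)=0$, closing the induction.

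The main obstacle is the bookkeeping in the inductive step: one must verify that every contribution to $D^2(\eta')|_{\land W\otimes V}$ beyond $\lambda\,\beta(\xi)\,v_i$ is killed either by the parity constraint or by the inductive hypothesis. The selection $a_i=a_{\min}$ together with the odd/even parity split is essential---it yields the bound $|\xi_J|+|\xi_K|\leq |\xi|+a_{\min}-2$ that keeps every other $\beta$ below the inductive threshold. With $\beta\equiv 0$ in hand, $S$ is a DG-algebra retraction of $J$, and spatial realization converts $S$ into the required section of $p_\Q$.
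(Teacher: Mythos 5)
Your overall strategy coincides with the paper's: reduce the existence of a section to a DG-algebra retraction $S$ with $S|_{\land W}=\mathrm{id}$ and $S(V)=0$, and verify that $S$ commutes with differentials by showing the pure $\land^{+}W$-component $\beta(v)$ of $D(v)$ vanishes for every generator, using $D^2=0$, the parity splitting, and the freeness of $\pi_*(\Omega X)\otimes\Q$. Your base case (playing two generators $v_i,v_j$ against the generator dual to $[y_i,y_j]$ and reading off $\beta(v_i)v_j \pm v_i\beta(v_j)=0$) is exactly the paper's opening move. Where you diverge is the inductive step, and that is where there is a genuine gap. Your argument hinges on the claim that $d_X(\eta')=\lambda\,\xi v_i+\sum c_{JK}\xi_J\xi_K$ with \emph{all} remaining monomials satisfying $|\xi_J|,|\xi_K|<|\xi|$. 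This is false in general: since $|\eta'|+1=|\xi|+a_{\min}$, a monomial $\xi'v_{i'}$ with $|\xi'|=|\xi|$ and $|v_{i'}|=a_{\min}$ has exactly the right degree, and such terms do occur. For instance, for $X=S^{2m+1}\vee S^{2m+1}\vee S^{2m+1}$ with Lie algebra generators $a,b,c$, the generator dual to $[[a,b],c]$ has a quadratic differential involving $\xi_{[a,b]}\xi_c$, $\xi_{[b,c]}\xi_a$ \emph{and} $\xi_{[a,c]}\xi_b$ (Jacobi), all with first factor of degree $|\xi|$. Consequently the coefficient of $v_i$ in $D^2(\eta')|_{\land W\otimes V}$ is in general a linear combination $\sum_{|\xi'|=|\xi|}c_{\xi'}\beta(\xi')$, and your single equation does not isolate $\beta(\xi)$; the inductive hypothesis (which only covers degrees $<|\xi|$) cannot eliminate these competitors. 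The gap is repairable---either by choosing the basis of $L$ in each degree so that the elements $[y,y_1]$ (for a fixed minimal generator $y_1$) are themselves basis elements, which uses the injectivity of $[-,y_1]$, i.e.\ precisely the centralizer fact you assert but do not prove; or by treating the equations for all $\eta'$ of a given degree as a linear system and again invoking that injectivity---but as written the step does not go through.

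For contrast, the paper sidesteps all of this bookkeeping: assuming $D(v_i)\in I(V)$ for $i<t$, it passes to the quotient $\land W\otimes\land V_{i\geq t}$ by the $D$-stable ideal $I(v_1,\dots,v_{t-1})$, uses the Mapping Theorem to see that $(\land V_{i\geq t},\overline{d_X})$ has LS category $\leq 1$ and hence is again the model of a wedge of odd spheres, and then reapplies the base-case computation to the \emph{two lowest} generators of the quotient to conclude $\overline{D}(v_t)=0$. This trades your explicit degree estimates for a structural reduction, and in particular never needs to control which quadratic monomials appear in $d_X$ of a higher generator. If you want to keep your direct approach, you must (i) prove the centralizer statement for the free graded Lie algebra, and (ii) use it to justify a basis choice (or a rank argument on the resulting linear system) that eliminates the same-degree competitors $\xi'v_i$; you should also restrict the base case to generators of minimal degree $a_{\min}$, since for a sphere class of non-minimal degree the term $h(w_{ij})$ can involve generators of intermediate degree whose $\beta$'s are not yet known.
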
 

\begin{proof} In the following, we make use of the identification $V^n \cong \Hom\big( \pi_n(X), \Q\big)$, where $(\land V, d)$ is the minimal model of $X$, and also the identification between Samelson products in the rational homotopy Lie algebra  $\pi_*(\Omega X) \otimes \Q$ and the quadratic part of  the differential in  $(\land V, d)$, for any space $X$.  See \cite[Th.15.11, Th.21.6]{F-H-T01} for details.
Hypothesis (1) implies that the minimal model $\land W, d_B$ for $B$ has trivial differential, $d_B = 0,$
with $W^{\mathrm{odd}} = 0.$         We deduce two consequences of  (2) for the minimal model of the fibre,  $\land  V, d_X$.  First we see  that $V^{\mathrm{even}} = 0$ (a wedge of odd-dimensional spheres has no non-zero rational homotopy in even degrees).  Write $V = \langle v_1, v_2, \ldots \rangle$ with $|v_i| \leq |v_j|$ whenever $i < j$ and each $|v_i|$ odd.  Then, for degree reasons, we have $d_X(v_1) = d_X(v_2) = 0$.     The rational homotopy Lie algebra $\pi_*(\Omega X) \otimes \Q$ is free as graded Lie algebra.  Translated to Sullivan models, we deduce, in particular, that there is some    $v_k$ for  $k \geq 3$, such that $d_F(v_k) = v_1  v_2$.

The relative Sullivan model for $X \to E \to B$ is of  the form:  
$$\land W, 0  \to \land W \otimes \land V, D \to \land V, d_X.$$
Given a subspace $V' \subseteq V$, let  $ I(V' ) = \land W \otimes \land ^{+}V'$ denote  the ideal generated by $V'$ in $\land W \otimes \land V.$ Our goal is to prove that $I(V)$  is a $D$-stable ideal of $\land W \otimes \land V.$  We may then define a DG algebra map $S \colon \land W \otimes \land V, D \to \land W, 0$   by $S(w) = w$ and $S(v) = 0$ to obtain the desired section.

We use induction on $i$ to show that, for any $v_i$, we have $D(v_i) \in I(V)$. 
First, we show that $D(v_1) = 0$.  For suppose that $D(v_1) = P$, for some polynomial $P \in \land W$.   Since $d_X(v_1) = d_X(v_2) = 0$,   we see that  $D(v_2) = P_2$ for some $P_2 \in \land W$.  Then, with $v_k$ chosen as above,  we have $D(v_k) = v_1 v_2 +P_k$, for some $P_k \in \land W$.  Notice that there cannot be a term from $\land ^{+}W \otimes \land^{+} V$ in $D(v_k)$, since $V$ is oddly graded and $D(v_k)$ is of even degree, and also because $v_1v_2$ is of minimal degree in $\land^2V$.  Furthermore,  since   $v_1 v_2$ is  the only term from $\land^2 V$ appearing in $d(v_k)$, $v_1v_2$ is the only such term appearing in $D(v_k)$.   Since $D^2(v_k) = 0$, we have
$$0 = D^2(v_k) = D(v_1  v_2 + P_k) = P v_2 - v_1 P_2,$$
and it follows that $P = 0$ (also that $P_2 = 0$). 
 
Now suppose that we have $D(v_i) \in I(V)$ for all $i < t$ and for some $t \geq 2$.  Then $I(v_1, \dots, v_{t-1})$ is a $D$-stable ideal of $\land W  \otimes \land V$.  We may take the quotient  by this ideal yielding the graded algebra 
$$\land W  \otimes \land V_{i \geq t}  =  \left(\land W \otimes \land V \right) / I(v_1, \dots, v_{t-1}).$$ 
Since $I(v_1, \dots, v_{t-1})$ is $D$-stable, $D$ induces a differential $\overline{D}$ on the quotient. 
 Let  $\pi \colon \land W  \otimes \land V, D  \to \land W \otimes \land V_{i \geq t}, \overline{D}$ denote the projection which is       a map of DG algebras.    
 
 Next  observe that $\land(v_1, \ldots, v_{t-1})$ is a $d_X$-stable  sub-algebra of $\land V, d_X$.  Write $\overline{d_X}$ for the induced differential and  $\pi' \colon \land V, d_X \to  \land V_{i \geq t}, \overline{d_X}$ for the projection. 
We claim that  $\land V_{i \geq t}, \overline{d_X}$ is the minimal model for a wedge of odd-dimensional spheres.   For observe that  $\pi'$   is surjective on generators.  Applying  the Mapping Theorem \cite[Th.29.5]{F-H-T01}, we deduce that  $$\cat(\land V_{i \geq t}, \overline{d_X}) \leq \cat(\land V, d_X) = \cat_0(X) = 1.$$
Our claim  follows (recall that $V$, hence $V_{i \geq t}$, is concentrated in odd degrees). 
 
Now consider the   commutative diagram of relative Sullivan models:  
$$\xymatrix{ 
\land W, 0 \ar[r] \ar@{=}[d] & \land W \otimes \land V, D \ar[r] \ar[d]^{\pi} & \land V, d_X \ar[d]^{\pi'} \\
\land W, 0 \ar[r]  & \land W \otimes \land V_{i\geq t}, \overline{D} \ar[r]  & \land V_{i\geq t}, \overline{d_X} . 
}$$
The bottom sequence of this diagram corresponds to a fibre sequence with the original base, say   $\overline{X}_\Q \to \overline{E}_\Q \to B_\Q$.   We have argued above that the fibre  $\overline{X}_\Q$,  with minimal model $\land V_{i\geq t}, \overline{d_X}$,   is   a wedge of odd-dimensional spheres. 
We can now apply  the first part of the argument above, to deduce that $\overline{D}{v_t} = 0$.  But this implies that $D(v_t) \in I(v_1, \ldots, v_{t-1}) \subseteq I(V)$.  
 By induction, we conclude that $I(V)$ is $D$-stable.
\end{proof}

\begin{examples} We allustrate that \propref{prop: odd spheres fibre} is a sharp result, at least if we consider rational fibrations with fibre a wedge of spheres and base a product of Eilenberg-Mac Lane spaces.

\noindent\textbf{(a)}  We must have only even-dimensional Eilenberg-Mac Lane spaces in the base.  For consider   $\Omega S^3 \ast \Omega S^3 \to   S^3 \vee S^3 \stackrel{p}{\to} S^3 \times S^3$ obtained by converting the inclusion to a fibration.  Here, the fibre $\Omega S^3 \ast \Omega S^3 $ is rationally  a wedge of odd-dimensional spheres.  We can see that $p_\Q$ does not admit a section, however, since  $p^* \colon H^*(S^3 \times S^3; \Q) \to H^*(S^3 \vee S^3; \Q)$ is not injective.   

\noindent\textbf{(b)}  We must have a wedge of at least two odd-dimensional  spheres in the fibre.  For example,   the path-loop fibration $K(\Q, 2k-1) \to  PK(\Q, 2k)  \to K(\Q, 2k)$  does not have a section. In fact,  since the total space here is contractible we have   $\secat_0(p) = \cat(K(\Q, 2k))  = +\infty$.
  Recall that we have $S^{2k-1} \simeq_\Q K(\Q, 2k-1).$ 

\noindent\textbf{(c)} 
We  cannot allow even-dimensional spheres in the  fibre.    For suppose $E'$ is a wedge of spheres  and set $E = S^{2k} \vee E'$.  By pinching off $E'$, then including the sphere as the bottom cell of an  Eilenberg-Mac Lane space, we obtain a fibre sequence $ X \to  E \stackrel{p}{\to} K(\Q, 2k)$  
with fibre $X$ a wedge of spheres (of both odd and even dimensions).  This fibration cannot admit a section, as $p^*$ is  not injective in cohomology.  
 \end{examples}

We next observe that the universal fibration $u_X$ for $X$ a nontrivial  elliptic space does not admit a section.  To prove this, we use the  {\em Gottlieb group} $G_*(X) \subseteq \pi_*(X)$ \cite{Got1}.  Recall  that $G_*(X) = \mathrm{Image}\{ \omega_\sharp \colon \pi_*(\aut(X) \to \pi_*(X) \} $ where $\omega \colon \aut(X) \to X$ is the evaluation map.

\begin{proposition}\label{elliptic}  Let $X$ be a nontrivial elliptic space, such as any $F_0$-space.  Then $\secat_0(u_X) \geq 1.$
\end{proposition}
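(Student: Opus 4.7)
The approach is to argue by contradiction: assume $(u_X)_\Q$ admits a section and deduce that the rational Gottlieb group $G_*(X) \otimes \Q$ must vanish identically, contradicting a known structural property of elliptic spaces.

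First I would recall the elementary fact that any section of a fibration $p$ makes $p_\sharp$ surjective on homotopy groups and so, by exactness, forces the connecting homomorphism of the long exact homotopy sequence to be zero. Since rationalization preserves long exact sequences of fibrations, applying this observation to $(u_X)_\Q$ gives that the connecting homomorphism $\partial \otimes \Q \colon \pi_n(\B(X)) \otimes \Q \to \pi_{n-1}(X) \otimes \Q$ vanishes for every $n$.

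Next I would invoke the classical Dold-Lashof identification: under the natural isomorphism $\pi_n(\B(X)) \cong \pi_{n-1}(\aut(X))$, the connecting homomorphism $\partial$ corresponds (up to sign) to the evaluation map $\omega_\sharp \colon \pi_{n-1}(\aut(X)) \to \pi_{n-1}(X)$. In particular, the image of $\partial$ is precisely the Gottlieb group $G_{n-1}(X)$, so the previous step forces $G_*(X) \otimes \Q = 0$ in all degrees.

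To close the argument, I would cite the standard result from the structure theory of elliptic spaces that a nontrivial rationally elliptic $X$ always has some nonzero rational Gottlieb element (a foundational fact in the theory, available in \cite{F-H-T01}). This yields the required contradiction. The main point requiring care is the identification of $\partial$ with the evaluation map; once that is in hand, the conclusion is a direct appeal to the structure theory of elliptic spaces.
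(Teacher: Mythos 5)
Your proposal is correct and follows essentially the same route as the paper: both rest on the identification of the image of the connecting homomorphism of the universal fibration with the Gottlieb group, together with the nonvanishing of the rational Gottlieb group of a nontrivial elliptic space (which the paper justifies by noting that the top-degree generator of the minimal model, necessarily in odd degree, is a Gottlieb element). The only difference is cosmetic: you phrase it as a contradiction, while the paper argues directly that $(u_X)_\Q$ fails to be surjective on rational homotopy and hence has no section.
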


\begin{proof}  First note that we have $G_*(X_\Q) \not= 0$ for any elliptic space $X$.  This is because, in the minimal model $(\land V, d)$ for $X$,  there is an (odd) integer $n > 0$ such that $V^n \neq 0$ and $V^m = 0$ for $m > n$.   Then it follows from the identification of the Gottlieb group in minimal model terms, discussed in \cite[\S 29d]{F-H-T01}, that we have $G_{n}(X_\Q) \cong V^{n} \not= 0$.     Next, by \cite[\S 4]{Got1},  $G_*(X)$ corresponds to the image of 
$\partial \colon \pi_{*+1}(\B(X)) \to \pi_*(X)$, the linking homomorphism in the long exact sequence  of the universal fibration.   Thus $G_*(X_\Q) \neq  0$ implies that $(u_X)_\Q$ does not induce a surjection on rational homotopy groups and so cannot admit a section.  \end{proof}

We apply the preceding  to prove our main result:

\begin{proof}[Proof of \thmref{main}]
Let $X$ be a nontrivial  $F_0$-space.   By Proposition \ref{elliptic}, we have $\secat_0(u_X) \geq 1.$ We   prove $\secat_0(u_X) \leq 1.$   

The fibrewise join of the   universal fibration, $u_X \ast u_X,$ has  base $\B(X)$ and fibre $X \ast X = \Sigma(X \wedge X)$.  By Theorem \ref{meier}, if $X$ satisfies Halperin's Conjecture then $\B(X)$ is rationally a product of even-dimensional Eilenberg-Mac Lane spaces.  Regarding the fibre, note that,    since  $H^*(X;\Q)$ is evenly graded,   $H^*(\Sigma(X \wedge X);\Q)$ is oddly graded. By \cite[Th.1.5]{HS},  $X \ast X$ has the rational homotopy type of  a wedge of odd-dimensional spheres.  If $\widetilde{H}^*(X;\Q)$ has dimension at least $2$, then 
$X \ast X$ is rationally a wedge of at least two odd-dimensional spheres.  Applying  \propref{prop: odd spheres fibre}, we conclude the rationalization of $u_X \ast u_X$ has a section and  we conlcude $\secat_0(u_X) \leq 1$ 
by Proposition \ref{join}. 

When   $\widetilde{H}^*(X;\Q)$ has dimension $1,$ then $X \simeq_\Q S^{2n}$   and we can invoke Stanley's result \cite[Lem.3.1]{Stanley}.  Alternately, we may observe that the   fibrewise join $u_{S^{2n}} \ast u_{S^{2n}}$ has fibre $S^{2n} \ast S^{2n} \simeq_\Q K(\Q, 4n -1)$ and base $\B(S^{2n})\simeq_\Q K(\Q, 4n).$  It follows easily from degree considerations that $(u_{S^{2n}} \ast u_{S^{2n}})_\Q$ is fibre-homotopically trivial and so, in particular, has a section. 
\end{proof}

Theorem \ref{main} reduces the computation of $\secat_0$ for  fibrations with fibre $X$  satisfying Halperin's Conjecture  to    the question of the existence of a section. Stanley expressed the obstruction to a section   in cohomological terms when  $X = S^{2n}$ \cite[Th.3.3]{Stanley}.  We follow his approach to obtain the following example. 

 \begin{example} Write $\Fib_{\C P^m}(\C P^n)$ for the set of   fibrations   $\C P^m \to E ~\stackrel{p}{\longrightarrow} \C P^n$ 
modulo   rational fibre-homotopy equivalence.   We assume $m < n.$   The identity  $\B(\C P^m) \simeq_\Q \prod_{k=1}^{m} K(\Q, 2k)$ follows from  \cite[Pro.2.6(iii)]{Meier}.  By universality, 
 $$ \Fib_{\C P^m}(\C P^n) \equiv [\C P^n, \B(\C P^m)_\Q] \equiv \oplus_{i=1}^{m}H^{2k}(\C P^n; \Q)  \equiv    \Q^{m}. $$
 We  associate  the  rational  fibre-homotopy equivalence class  of  $p$  with an explicit $m$-tuple $(a_{m-1}, \ldots, a_0) \in \Q^m$   defined as follows.  The relative Sullivan model  for $p$ is an inclusion $\land(x_2, y_{2n+1}; d) \to \land(x_2, y_{2n+1}) \otimes \land(u_2, v_{2m+1}), D$
 with   $Dx = dx = 0,  Dy = dy = x^{n+1}, Du = 0$.  As for $Dv,$ we have $$Dv = u^{m+1} + a_m u^mx + a_{m-1}u^{m-1}x^2 + \cdots + a_0x^{m+1} $$  for some $a_i \in \Q.$ The basis change $u \mapsto u -\frac{1}{m+1}a_m x$   will depress the polynomial and we may assume $a_m = 0$. 
 
 Given a section  $S \colon \land(x, y) \otimes \land(u, v)  \to   \land (x, y)$   write $S(u) = qx$ for $q \in \Q$.  Comparing  coefficients of $x^{m+1}$ in the equation $S(Dv) = DS(v) = 0$ we see that $z = q$ is a solution to $z^{m+1} + a_{m-1}z^{m-1} + \cdots + a_1z + a_0 = 0.$     The converse follows similarly and   we obtain:  $$\secat_0(p) = \left\{ \begin{array}{ll} 0 & \hbox{\, if } z^{m+1} + a_{m-1}z^{m-1} +  \cdots +a_1z   + a_0  \hbox{\, has a rational root} \\ \\
1 & \hbox{\, otherwise}.
\end{array} \right.$$    
\end{example}

   We conclude with a   question arising from our work.   Meier \cite{Meier} and others have given various equivalent versions of  Halperin's Conjecture.  It would be interesting to have an equivalent version of the conjecture phrased in terms of the sectional category of the universal fibration.   We pose the following: 
\begin{question} Let $X$ be an $F_0$-space.  Does $\secat_0(u_X) = 1$ imply $X$ satisfies  Halperin's Conjecture?  
\end{question}


\begin{thebibliography}{10}

\bibitem{CLOT}
Octav Cornea, Gregory Lupton, John Oprea, and Daniel Tanr{\'e},
  \emph{Lusternik-{S}chnirelmann category}, Mathematical Surveys and
  Monographs, vol. 103, American Mathematical Society, Providence, RI, 2003.
  \MR{1990857}

\bibitem{Dold}
Albrecht Dold, \emph{Halbexakte {H}omotopiefunktoren}, Lecture Notes in
  Mathematics, vol.~12, Springer-Verlag, Berlin-New York, 1966. \MR{0198464 (33
  \#6622)}

\bibitem{D-L59}
Albrecht Dold and Richard Lashof, \emph{Principal quasi-fibrations and fibre
  homotopy equivalence of bundles.}, Illinois J. Math. \textbf{3} (1959),
  285--305. \MR{0101521 (21 \#331)}

\bibitem{Far03}
M.~Farber, \emph{Topological complexity of motion planning}, Discrete Comput.
  Geom. \textbf{29} (2003), no.~2, 211--221.

\bibitem{F-H-T01}
Yves F{\'e}lix, Stephen Halperin, and Jean-Claude Thomas, \emph{Rational
  homotopy theory}, Graduate Texts in Mathematics, vol. 205, Springer-Verlag,
  New York, 2001. \MR{1802847 (2002d:55014)}

\bibitem{Gat95}
J.-B. Gatsinzi, \emph{L{S}-category of classifying spaces}, Bull. Belg. Math.
  Soc. Simon Stevin \textbf{2} (1995), no.~2, 121--126. \MR{1332389
  (96b:55010)}

\bibitem{Gat96}
\bysame, \emph{L{S}-category of classifying spaces. {II}}, Bull. Belg. Math.
  Soc. Simon Stevin \textbf{3} (1996), no.~2, 243--248. \MR{1389618
  (97d:55022)}

\bibitem{Got1}
Daniel~Henry Gottlieb, \emph{Evaluation subgroups of homotopy groups}, Amer. J.
  Math. \textbf{91} (1969), 729--756. \MR{0275424}

\bibitem{Got2}
\bysame, \emph{The total space of universal fibrations}, Pacific J. Math.
  \textbf{46} (1973), 415--417. \MR{0331384}

\bibitem{HS}
Stephen Halperin and James Stasheff, \emph{Obstructions to homotopy
  equivalences}, Adv. in Math. \textbf{32} (1979), no.~3, 233--279. \MR{539532}

\bibitem{James}
I.~M. James, \emph{On category, in the sense of {L}usternik-{S}chnirelmann},
  Topology \textbf{17} (1978), no.~4, 331--348. \MR{516214}

\bibitem{Lupton}
Gregory Lupton, \emph{Note on a conjecture of {S}tephen {H}alperin's}, Topology
  and combinatorial group theory ({H}anover, {NH}, 1986/1987; {E}nfield, {NH},
  1988), Lecture Notes in Math., vol. 1440, Springer, Berlin, 1990,
  pp.~148--163. \MR{1082989}

\bibitem{Markl}
Martin Markl, \emph{Towards one conjecture on collapsing of the {S}erre
  spectral sequence}, Proceedings of the {W}inter {S}chool on {G}eometry and
  {P}hysics ({S}rn\'\i, 1989), no.~22, 1990, pp.~151--159. \MR{1061796}

\bibitem{May}
J.~Peter May, \emph{Classifying spaces and fibrations}, Mem. Amer. Math. Soc.
  \textbf{1} (1975), no.~1, 155, xiii+98. \MR{0370579 (51 \#6806)}

\bibitem{Meier}
W.~Meier, \emph{Rational universal fibrations and flag manifolds}, Math. Ann.
  \textbf{258} (1981/82), no.~3, 329--340. \MR{649203 (83g:55009)}

\bibitem{Stanley}
Don Stanley, \emph{The sectional category of spherical fibrations}, Proc. Amer.
  Math. Soc. \textbf{128} (2000), no.~10, 3137--3143. \MR{1691006}

\bibitem{St}
James Stasheff, \emph{A classification theorem for fibre spaces}, Topology
  \textbf{2} (1963), 239--246. \MR{0154286 (27 \#4235)}

\bibitem{Thomas}
Jean-Claude Thomas, \emph{Rational homotopy of {S}erre fibrations}, Ann. Inst.
  Fourier (Grenoble) \textbf{31} (1981), no.~3, v, 71--90. \MR{638617}

\end{thebibliography}
     
\providecommand{\bysame}{\leavevmode\hbox to3em{\hrulefill}\thinspace}
\providecommand{\MR}{\relax\ifhmode\unskip\space\fi MR }
\providecommand{\MRhref}[2]{%
  \href{http://www.ams.org/mathscinet-getitem?mr=#1}{#2}
}
\providecommand{\href}[2]{#2}

\end{document}